\documentclass[12pt,a4paper,oneside]{amsart}
\usepackage{amsfonts, amsmath, amssymb, amsthm}
\usepackage{mathtools}
\usepackage{anysize}
\marginsize{2cm}{2cm}{1.5cm}{1.5cm}

\usepackage[utf8]{inputenc}
\usepackage[english]{babel}
\usepackage{cmap}
\usepackage{enumerate}
\usepackage{xcolor}
\usepackage{hyperref}
\hypersetup{
	colorlinks   = true,
	urlcolor     = blue,
	linkcolor    = blue,
	citecolor    = red
}
\usepackage{cancel}

\theoremstyle{theorem}
\newtheorem{thm}{Theorem}[section]
\newtheorem{prp}[thm]{Proposition}
\newtheorem{lem}[thm]{Lemma}

\theoremstyle{definition}
\newtheorem{dfn}{Definition}[section]
\newtheorem{rem}[dfn]{Remark}

\newcommand{\Href}[2]{\hyperref[#2]{#1~\ref{#2}}}

\newcommand{\st}{:\;}

\def\N{{\mathbb N}}
\def\R{{\mathbb R}}

\newcommand{\ball}[1]{\mathbf{B}^{#1}}

\newcommand{\dist}[2]{\operatorname{dist}\!\left( #1, #2 \right)}

\newcommand{\iprod}[2]{\left\langle#1,#2\right\rangle}

\newcommand{\norm}[1]{\left\|#1\right\|}

\providecommand{\card}[1]{\lvert#1\rvert}

\newcommand{\conv}{\mathrm{conv}}

\providecommand{\parenth}[1]{\left(#1\right)}
\providecommand{\braces}[1]{\left\{#1\right\}}

\def\epsilon{\varepsilon}

\title{On Banach Spaces with the Helly Approximation Property}
\author{Grigory Ivanov}
\subjclass[2020]{Primary 52A35; Secondary 46B07, 46B09, 52A27}
\keywords{Helly theorem, no-dimensional Helly theorem,  Rademacher type, Maurey's lemma}

\begin{document}

\begin{abstract}
Qualitatively, a no-dimensional Helly-type theorem says that if every small subfamily of convex sets has a common point in a bounded region, then suitable neighborhoods of all the sets in the whole family have a common point. Quantitative bounds, when available, depend on the ambient metric. We say that a Banach space has the Helly approximation property if the radii of these neighborhoods tend to zero as the size of the subfamilies tends to infinity.

In this paper, we show that the Helly approximation property holds if and only if the dual space has non-trivial Rademacher type. The argument combines Maurey's empirical method with a duality argument at a minimizer of the maximal distance function. We also prove a colorful version of this theorem, with control over the average of the radii.
\end{abstract}
\maketitle

\section{Introduction}

In \cite{adiprasito2020theorems}, Adiprasito, Bárány, Mustafa, and Terkaj introduced and proved the following no-dimensional version of the celebrated Helly theorem:
\medskip

\noindent
\emph{If a finite family of convex sets in a Euclidean space has the property that every $k$-element subfamily has a common point inside the unit ball, then there exists a point at distance at most \(1/\sqrt{k}\) from all the sets.}

\medskip

Their proof uses a simple but clever geometric argument based on the Pythagorean theorem: it estimates the distance between the origin and the intersection of two convex sets \(K\) and \(L\) in terms of the distance from the origin to \(K\) and the distance from \(K\) to \(L\).
Later, the author extended this result to uniformly convex Banach spaces \cite{Ivanov2025nodimHelly}, replacing the Pythagorean theorem by estimates involving the modulus of convexity.

Let us formalize the problem for an arbitrary Banach space, following \cite{ivanov2026nodimensionalresults}.

Recall that the \(R\)-neighborhood of a set \(A\) is the set
\[
\braces{x \in X \st \dist{x}{A} \le R}.
\]

\begin{dfn}\label{dfn:helly_sequence}
Let \(X\) be a Banach space.
A sequence \(\braces{r_k(X)}_{k \in \N}\) is called a \emph{Helly approximation sequence} of \(X\) if it has the following property:

\smallskip
\noindent
For every finite family \(\mathcal{F}\) of convex subsets of \(X\), if every subfamily of \(\mathcal{F}\) of size \(k\) has a common point inside the unit ball, then the \(r_k(X)\)-neighborhoods of all sets in \(\mathcal{F}\) have a common point.
\end{dfn}

\begin{dfn}\label{dfn:helly_approx_property}
We say that a Banach space \((X,\norm{\cdot})\) has the \emph{Helly approximation property} if there exists a Helly approximation sequence \(\braces{r_k(X)}_{k \in \N}\) such that
\[
r_k(X)\to 0 \qquad\text{as } k\to\infty.
\]
\end{dfn}

Thus, \cite[Theorem~1.2]{adiprasito2020theorems} states that \(\braces{1/\sqrt{k}}\) is a Helly approximation sequence for Euclidean spaces, and in \cite{Ivanov2025nodimHelly} it was shown that uniformly convex Banach spaces have the Helly approximation property, with the corresponding sequences estimated in terms of the modulus of convexity.

Two related problems have remained open (see \cite[Problem~4.1]{ivanov2026nodimensionalresults}). The first is to characterize Banach spaces with the Helly approximation property; the second is to provide a probabilistic proof of the no-dimensional Helly theorem. In this paper we resolve both problems.

Recall that a Banach space \(X\) is said to be of 
\emph{ (Rademacher) type \(p\)} for some \(p\ge 1\) if there exists a constant \(T_p(X)\) such that, for every finite set \(S\) of vectors in \(X\), the average norm of the \(2^{|S|}\) vectors of the form \(\sum_{s\in S}\pm s\) does not exceed
\[
T_p(X)\parenth{\sum_{s\in S}\norm{s}^p }^{\!1/p};
\]
see also~\cite[Definition~1.e.12]{lindenstrauss2013classical}.
We denote by \(T_p(X)\) the minimal constant in this inequality.

By the triangle inequality, every Banach space is of type~1. It is well known that the type of a Banach space cannot exceed~2, and that the maximal type of \(\ell_p\), \(p \in (1,+\infty)\), is \(\min\{2,p\}\).

We will say that a space is of \emph{non-trivial type} if its maximal type is strictly greater than one, and of \emph{trivial type} otherwise.

We denote the unit ball of a Banach space \(X\) by \(\ball{}_X\), 
$\iprod{p}{x}$ denotes the value of a functional $p$ on $x$, $[n]$ denotes the set $\braces{1, \dots, n}$.

Our main results are as follows.

\begin{thm}[No-dimensional Helly]\label{thm:no_dim_Helly_dual_type}
Let \(X\) be a Banach space such that \(X^*\) has Rademacher type \(p\in(1,2]\) with constant \(T_p(X^*)\). Set
\[
r_k(X)= 6 T_p(X^*)\,k^{-1+\frac{1}{p}}.
\]
Let \(\mathcal{F}\) be a finite family of convex sets in \(X\). Assume that for every choice of \(k\) sets
\(
K_1, \dots, K_k \in \mathcal{F}
\)
the intersection
\(
\ball{}_X\cap \bigcap_{i=1}^k K_i
\)
is non-empty.
Then there exists a point \(x\in X\) such that
\[
\dist{x}{K}\le r_k(X)
\qquad\text{for all } K\in\mathcal{F}.
\]

In particular, a Banach space whose dual is of non-trivial type has the Helly approximation property.
\end{thm}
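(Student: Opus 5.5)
We argue by duality at a minimizer of the maximal distance function, combined with Maurey's empirical method in \(X^*\). First we reduce to bounded sets: replace every \(K\in\mathcal{F}\) by \(K'=K\cap\ball{}_X\). The hypothesis is unchanged, since any \(k\) of the sets \(K'\) still have a common point, and \(\dist{x}{K}\le\dist{x}{K'}\). So we may assume every \(K\in\mathcal{F}\) lies in \(\ball{}_X\). Set
\[
f(x)=\max_{K\in\mathcal{F}}\dist{x}{K}.
\]
This is a convex \(1\)-Lipschitz function with \(f(0)\le 1\). Since \(f(x)\ge\norm{x}-1\), every point with \(f(x)\le 1\) satisfies \(\norm{x}\le 2\). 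Let \(r=\inf f\), and suppose, for contradiction, that \(r>r_k(X)\).

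Next comes the duality step. Suppose first that the infimum is attained at \(x_0\), so \(\norm{x_0}\le 2\). Since \(x_0\) minimizes a finite maximum of convex functions, \(0\in\partial f(x_0)\). The subdifferential of a maximum is the convex hull of the subdifferentials of the active functions. Hence there are weights \(\lambda_K\ge 0\), \(\sum\lambda_K=1\), supported on the active sets (those with \(\dist{x_0}{K}=r>0\)), and functionals \(p_K\in\partial\,\dist{\cdot}{K}(x_0)\) with \(\sum_K\lambda_K p_K=0\). For \(x_0\notin K\), such a subgradient has \(\norm{p_K}=1\) and separates, in the sense that
\[
\iprod{p_K}{y-x_0}\ge r\quad\text{for all } y\in K.
\]
In infinite dimensions the minimum need not be attained. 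In that case we would take an \(\epsilon\)-minimizer and use an approximate version: either the \(\epsilon\)-subdifferential together with Brøndsted--Rockafellar, or Ekeland's principle applied to \(f\). This yields the same structure with \(\norm{\sum\lambda_Kp_K}\le\epsilon\), separation constant \(r-\epsilon\), and \(\norm{x_0}\le 2+\epsilon\); at the end we let \(\epsilon\to0\). I expect this technical approximation to be the main obstacle; the rest is routine.

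Then we apply Maurey's empirical method. Draw \(K_1,\dots,K_k\) independently from \(\mathcal{F}\) with distribution \(\lambda\), and write \(p_j=p_{K_j}\). These are i.i.d. mean-zero random vectors in \(X^*\) of norm at most \(1\). By the standard symmetrization inequality and the type \(p\) of \(X^*\),
\[
\mathbb{E}\norm{\tfrac1k\sum_{j=1}^k p_j}\le \tfrac{2}{k}\,\mathbb{E}\,\mathbb{E}_\varepsilon\norm{\sum_j\varepsilon_jp_j}\le \tfrac{2}{k}T_p(X^*)\,k^{1/p}=2T_p(X^*)\,k^{-1+\frac1p}.
\]
Hence some choice of \(K_1,\dots,K_k\) satisfies this bound. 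These need not be distinct, but the hypothesis still applies to them, since a common point of the distinct sets among them serves all.

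Finally we reach the contradiction. Pick \(y\in\ball{}_X\cap\bigcap_jK_j\). Averaging the separation inequalities gives
\[
r\le\iprod{\tfrac1k\textstyle\sum_jp_j}{y-x_0}\le 2T_p(X^*)k^{-1+\frac1p}\,(\norm{y}+\norm{x_0})\le 6T_p(X^*)k^{-1+\frac1p}=r_k(X).
\]
This contradicts \(r>r_k(X)\). Therefore some point \(x\) satisfies \(f(x)\le r_k(X)\) (up to an arbitrarily small \(\epsilon\), removed by compactness of the finite family's constraint or by the approximation limit). The final sentence of the theorem follows because \(k^{-1+1/p}\to0\) when \(p>1\).
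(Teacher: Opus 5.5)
Your mechanism is the paper's own: Steps 2--3 of the proof of \Href{Theorem}{thm:colorful_no_dim_Helly_dual_type} with all colors equal. You minimize \(f\), split \(0\in\partial f(x_0)\) over the active sets, apply Maurey's sampling in the dual, and average the separation inequalities at a common witness point. When the minimum is attained (e.g.\ \(\dim X<\infty\)), your argument is complete up to a sign slip: for \(p_K\in\partial d_K(x_0)\), \(d_K:=\dist{\cdot}{K}\), one gets \(\iprod{p_K}{y-x_0}\le -r\), so replace \(p_K\) by \(-p_K\).

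The gap is the infinite-dimensional case, which you only sketch and whose last step fails as written. Grant the sketch: Ekeland yields \(\bar x\) with \(f(\bar x)\le \inf f+\epsilon\) that minimizes \(f+\sqrt{\epsilon}\,\norm{\cdot-\bar x}\). Hence some \(a\in\partial f(\bar x)\) has \(\norm{a}\le\sqrt{\epsilon}\), and the max rule and Maurey's bound (now with mean \(a\)) go through. Even so, your contradiction only proves \(\inf f\le r_k(X)\). The theorem asks for a point \(x\) with \(f(x)\le r_k(X)\), and there is no compactness to invoke. The sets are arbitrary convex sets, and \(X\) need not be reflexive (non-trivial type of \(X^*\) does not force reflexivity). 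So \(f\) need not attain its infimum, and the borderline case \(\inf f=r_k(X)\) is not excluded.

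The paper avoids this by reducing to finite dimensions first. Each \(K\) is replaced by the convex hull of finitely many witness points, one from \(\ball{}_X\cap\bigcap_i K_i\) for each admissible \(k\)-tuple containing \(K\). This preserves the hypothesis, only increases distances, and places everything in a finite-dimensional span \(E\) where the minimum is attained. The price is \Href{Lemma}{lem:type_of_dual_subspace}, i.e.\ \(T_p(E^*)\le T_p(X^*)\), which holds because \(E^*\cong X^*/E^\perp\) isometrically and type constants do not increase under quotients.

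Your direct route avoids that lemma but needs Ekeland and infinite-dimensional subdifferential calculus. It can be closed by a strictness argument. Set \(m=2T_p(X^*)k^{-1+1/p}\), and use \(\norm{\bar x}\le 1+f(\bar x)\) (valid since all sets lie in \(\ball{}_X\)) instead of \(\norm{\bar x}\le 2\). This gives \(f(\bar x)\le (m+\sqrt{\epsilon})(2+f(\bar x))\), hence \(\inf f\le 2m/(1-m)\), which is strictly less than \(3m=r_k(X)\) when \(m<1/3\). For \(m\ge 1/3\), the point \(x=0\) already works, since \(f(0)\le 1\le r_k(X)\).
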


We obtain this theorem as a straightforward corollary of the following colorful version.

\begin{thm}[Colorful no-dimensional Helly]\label{thm:colorful_no_dim_Helly_dual_type}
Let \(X\) be a Banach space such that \(X^*\) has Rademacher type \(p\in(1,2]\) with constant \(T_p(X^*)\). Set
\[
r_k(X)=6T_p(X^*)\,k^{-1+\frac{1}{p}}.
\]
Let \(\mathcal{F}_1,\dots,\mathcal{F}_k\) be finite families of convex sets in \(X\). Assume that for every ``rainbow'' choice
\(
K_i \in \mathcal{F}_i,\ i \in[k],
\)
the intersection
\(
\ball{}_X \cap \bigcap_{i=1}^k K_i
\)
is non-empty.
Then there exist radii \(r_1, \dots, r_k\) with
\[
\frac{r_1 + \dots + r_k}{k} \leq r_k(X)
\]
and a point \(x\in X\) such that
\[
\dist{x}{K}\le r_i
\qquad\text{for all } K\in\mathcal{F}_{i},\ i\in[k].
\]

In particular, there is a color \(i_0\in[k]\) such that
\[
\dist{x}{K}\le r_k(X)
\qquad\text{for all } K\in\mathcal{F}_{i_0}.
\]
\end{thm}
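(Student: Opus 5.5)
We will prove Theorem~\ref{thm:colorful_no_dim_Helly_dual_type} by duality at a minimizer of a sum of maximal distance functions, and then estimate the resulting dual certificate with Maurey's empirical method. For each color \(i\in[k]\) set \(d_i(x)=\max_{K\in\mathcal{F}_i}\dist{x}{K}\), which is a convex \(1\)-Lipschitz function. Let \(\Phi(x)=\sum_{i=1}^k d_i(x)\). We will take a (near-)minimizer \(x_0\) of \(\Phi\) and put \(r_i=d_i(x_0)\). The goal is then to show \(\sum_i r_i\le 6T_p(X^*)k^{1/p}\).

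\emph{Step 1 (optimality condition).} Suppose first that \(\Phi\) attains its minimum at \(x_0\). All functions involved are finite and continuous, so the sum rule and the max rule for subdifferentials apply, and they give
\[
0=\sum_{i=1}^k q_i,\qquad q_i=\sum_{K\in\mathcal{F}_i}\lambda_{i,K}\,p_{i,K}.
\]
Here \(\lambda_{i,\cdot}\) is a probability vector supported on the sets \(K\in\mathcal{F}_i\) that are active, i.e.\ with \(\dist{x_0}{K}=r_i\). Each \(p_{i,K}\in\partial\dist{\cdot}{K}(x_0)\) satisfies \(\norm{p_{i,K}}\le1\) and
\[
\iprod{p_{i,K}}{y-x_0}\le -\dist{x_0}{K}=-r_i\quad\text{for all } y\in K .
\]
If \(r_i=0\), we simply take \(p_{i,K}=0\), and the inequality still holds.

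\emph{Step 2 (random rainbow choice).} Choose \(K_i\in\mathcal{F}_i\) independently, with \(\mathbb P(K_i=K)=\lambda_{i,K}\), and write \(p_i=p_{i,K_i}\). By hypothesis there is \(y\in\ball{}_X\cap\bigcap_i K_i\). Summing the inequalities from Step~1 gives
\[
\sum_i r_i\le -\iprod{\textstyle\sum_i p_i}{y}+\iprod{\textstyle\sum_i p_i}{x_0}\le \norm{\textstyle\sum_i (p_i-q_i)}+\iprod{\textstyle\sum_i (p_i-q_i)}{x_0},
\]
where we used \(\sum_i q_i=0\) and \(\norm{y}\le1\). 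The key point is that \(x_0\) is deterministic, so the last term has expectation zero. The unbounded location of \(x_0\) therefore never enters the estimate.

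\emph{Step 3 (Maurey/type estimate).} The vectors \(p_i-q_i\) are independent, mean zero, and satisfy \(\norm{p_i-q_i}\le2\). Standard symmetrization together with type \(p\) of \(X^*\) gives
\[
\mathbb E\norm{\textstyle\sum_i(p_i-q_i)}\le 2T_p(X^*)\Bigl(\textstyle\sum_i \mathbb E\norm{p_i-q_i}^p\Bigr)^{1/p}\le 4T_p(X^*)k^{1/p}.
\]
Taking expectations in Step~2 yields \(\frac1k\sum_i r_i\le 4T_p(X^*)k^{-1+1/p}\). The final claim of the theorem follows by choosing \(i_0\) with \(r_{i_0}\le\frac1k\sum_i r_i\).

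\emph{Main obstacle: existence of a minimizer.} In a general Banach space \(\Phi\) need not attain its minimum. I would first try to work with an \(\epsilon\)-minimizer and the \(\epsilon\)-subdifferential, or equivalently invoke Ekeland's principle or the Brøndsted--Rockafellar theorem. This produces \(x_0\) with \(0\in\partial\Phi(x_0)+\epsilon\ball{}_{X^*}\), where \(\partial\) is now an approximate subdifferential. The calculus of Step~1 then holds with additive errors \(O(\epsilon)\), in the form \(\sum_i q_i=e\) with \(\norm{e}\le\epsilon\) and active sets replaced by \(\epsilon\)-active ones. In Step~2 these errors contribute \(O(\epsilon)(1+\norm{x_0})+O(k\epsilon)\). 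Since \(\Phi\) is coercive along the bounded region of interest, and we may restrict to \(\norm{x_0}\) bounded by a constant depending on the family, choosing \(\epsilon\) small absorbs these errors into the slack between the constants \(4\) and \(6\).
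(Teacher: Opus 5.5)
Steps 1--3 of your proposal are correct, and they follow a genuinely different route from the paper.

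\textbf{How the two arguments differ.} The paper first replaces each set by the convex hull of finitely many rainbow witnesses in $\ball{}_X$ and works in their finite-dimensional span $E$. There an exact minimizer exists and lies in $2\ball{}_E$, and the type constant is transferred by Lemma~\ref{lem:type_of_dual_subspace}. The paper then derandomizes through Lemma~\ref{lem:colorful_dual_Maurey_Helly} \emph{before} invoking a witness $q$. That is why it pays $\norm{q-x_0}\le 3$ and ends with $6=2\cdot 3$.

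You keep the rainbow choice random to the end and use that $x_0$ is deterministic. Then $\mathbb{E}\iprod{\sum_i p_i}{x_0}=\iprod{\sum_i q_i}{x_0}=0$, and only $\norm{y}\le 1$ enters. This buys three things:
\begin{enumerate}
\item the location of $x_0$ becomes irrelevant;
\item you can use subgradients in $X^*$ directly, with no quotient lemma;
\item the constant improves. You get $4$, and in fact $2$: since $\sum_i(p_i-q_i)=\sum_i p_i$, symmetrizing against an independent copy as in the proof of Lemma~\ref{lem:colorful_dual_Maurey_Helly} gives $\mathbb{E}\norm{\sum_i p_i}\le 2T_p(X^*)\,k^{1/p}$.
\end{enumerate}

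\textbf{The gap: non-attainment.} The price of working in $X$ is that you must handle non-attainment, and the justification in your last paragraph does not work as written. There is in general no family-dependent bound on $\norm{x_0}$. With unbounded sets $\Phi$ need not be coercive, and near-minimizers can escape to infinity. For example, in $\R^2$ with coordinates $(s,t)$, let $\mathcal{F}_1$ consist of $\braces{t\le 0}$ and $\braces{s>0,\ t\ge c/\sqrt{s}}$ for small $c>0$, and let $\mathcal{F}_2=\dots=\mathcal{F}_k=\braces{\R^2}$. Every $\epsilon$-minimizer then has norm of order at least $c^2\epsilon^{-2}$, so your error term $\epsilon(1+\norm{x_0})$ blows up as $\epsilon\to 0$.

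\textbf{How to repair it.} Let Ekeland's radius scale with the starting point. Take an $\epsilon$-minimizer $\bar x$ and $\lambda\ge\max\braces{1,\norm{\bar x}}$. Ekeland gives $x_0$ with $\norm{x_0}\le 2\lambda$ that minimizes $\Phi+(\epsilon/\lambda)\norm{\cdot-x_0}$. Exact subdifferential calculus at $x_0$, with exactly active sets, yields $\sum_i q_i=e$ with $\norm{e}\le\epsilon/\lambda$. The total error $\norm{e}+\iprod{e}{x_0}$ is then at most $3\epsilon$. Near-optimality of $x_0$ plays no role; only the approximate first-order condition is used.

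Two alternatives also work. The $\epsilon$-subdifferential sum and max rules give $\sum_i q_i=0$ exactly, with additive errors whose expected total is at most $\epsilon$. Or you can adopt the paper's witness-polytope reduction and run your argument verbatim in $E$.

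\textbf{A minor slip.} Drop the device of setting $p_{i,K}=0$ when $r_i=0$. It changes $q_i$ and can break $\sum_i q_i=0$. It is also unnecessary: every subgradient $p$ of $\dist{\cdot}{K}$ at a point $x_0\in\overline{K}$ already satisfies $\iprod{p}{y-x_0}\le 0$ for $y\in K$.
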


\begin{rem}
Our version of the colorful no-dimensional Helly theorem is stronger than the earlier versions in \cite{adiprasito2020theorems} and \cite{Ivanov2025nodimHelly}, since it provides an estimate for the average of the radii.
\end{rem}

We note that the dual space appears naturally, since we reduce the problem to a dual one, namely a no-dimensional Carathéodory lemma. We introduce the necessary notation and explain this duality in the next section. Then, we  prove the no-dimensional Helly theorems. 
Finally, we obtain the desired characterization by providing a counterexample in \Href{Section}{sec:counterexample_Helly_trivial_type}:

\begin{thm}\label{thm:Helly_approx_prop_characterization}
The following properties are equivalent for a Banach space \(X\):
\begin{enumerate}
\item \(X\) has the Helly approximation property;
\item \(X^*\) is of non-trivial type;
\item \(X\) is of non-trivial type.
\end{enumerate}
\end{thm}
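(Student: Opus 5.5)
The equivalence splits into three implications: (2)$\Rightarrow$(1), (2)$\Leftrightarrow$(3), and (1)$\Rightarrow$(3).

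\emph{(2)$\Rightarrow$(1).} This is exactly the last assertion of \Href{Theorem}{thm:no_dim_Helly_dual_type}. If $X^*$ has type $p>1$, then $r_k(X)=6T_p(X^*)k^{-1+1/p}\to 0$.

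\emph{(2)$\Leftrightarrow$(3).} This is classical Banach space theory, which I would cite rather than reprove. By the Maurey--Pisier theorem, a space has non-trivial type iff it does not contain the spaces $\ell_1^n$ uniformly, i.e. iff it is $B$-convex. $B$-convexity passes between $X$ and $X^*$; this is Pisier's theorem that $X$ is $K$-convex iff $X$ has non-trivial type. Moreover, $K$-convexity of $X$ is equivalent to that of $X^*$, and it yields type--cotype duality. Alternatively, by the local reflexivity principle, $\ell_1^n$'s lie uniformly in $X$ iff $\ell_\infty^n$'s lie uniformly in $X^*$. Hence $X^*$ has trivial type iff $X^*$ contains the $\ell_1^n$ uniformly iff $X$ contains the $\ell_1^n$ uniformly, i.e. iff $X$ has trivial type.

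\emph{(1)$\Rightarrow$(3).} This is the real content, handled by the counterexample section. Suppose $X$ has trivial type. Then for every $n$ and $\epsilon>0$ there is a subspace $E\subset X$ that is $(1+\epsilon)$-isomorphic to $\ell_1^n$. The first step is to build, inside $\ell_1^n$, a family of convex sets for which every $k$-subfamily meets in the unit ball, yet any common point of all the $r$-neighborhoods forces $r$ to be bounded below by an absolute constant independent of $k$, for $n$ large relative to $k$. The natural candidate is the family of hyperplanes (or half-spaces) $K_j=\{x\in\ell_1^n \st x_j = 1\}$ for $j\in[n]$. Any $k$ of them meet at $\sum_{j\in S}e_j$, which has norm $k$, too big; rescaling by $1/k$ then makes the sets $\{x_j=1/k\}$ shrink trivially. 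So instead I would dualize the Carathéodory picture. Take sets $K_\sigma=\{x \st \iprod{f_\sigma}{x}\ge 1\}$ where the $f_\sigma$ range over sign vectors in $\ell_\infty^n=(\ell_1^n)^*$ coming from a Rademacher-type obstruction, i.e. $\ell_\infty^{2^m}$ containing $\ell_1^m$ isometrically via coordinates. By the duality in the paper's next section, the Helly radius is governed by the no-dimensional Carathéodory radius in the dual, and in $\ell_\infty$ averages of $k$ extreme points need not approach the convex hull of the whole family faster than a constant. Concretely, I would let the sets be half-spaces $\{x\st \iprod{\phi_i}{x}\ge 1\}$ with $\phi_i$ the $\ell_1^m$ unit vectors embedded in $\ell_\infty^{2^m}$. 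Any $k$ of them admit a common point of norm $\le 1$. In contrast, a point within distance $r$ of all of them would give $\|\frac1m\sum\phi_i\|\ge 1-r$, while that norm is small. The dual ball versus the primal ball needs care here, and I expect choosing the precise finite-dimensional family to be the main obstacle.

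Finally, to transfer the construction into $X$, I would embed the finite-dimensional sets via the almost-isometry $E\to X$. Convex sets in $E$ remain convex in $X$. However, distances in $X$ to sets in $E$ may shrink, since the closest point may lie outside $E$. To fix this, I would choose half-spaces whose defining functionals extend to $X$ by Hahn--Banach with the same norm, so that the distance lower bounds persist. Since the constant obtained does not depend on $k$, one gets $\inf_k r_k(X)>0$ for every Helly approximation sequence, so (1) fails.
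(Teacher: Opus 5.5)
\paragraph{Verdict.} Your proof has a genuine gap: the direction (1)$\Rightarrow$(3), which carries the real content, never produces a working finite-dimensional family. Your (2)$\Rightarrow$(1) and your citation of Pisier's $K$-convexity theorem for (2)$\Leftrightarrow$(3) agree with the paper.

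\paragraph{Side remark on (2)$\Leftrightarrow$(3).} Drop the ``alternatively'' argument. The claim that the $\ell_1^n$ lie uniformly in $X$ iff the $\ell_\infty^n$ lie uniformly in $X^*$ is false. Take $X=c_0$: it contains the $\ell_1^n$ uniformly, while $X^*=\ell_1$ has cotype $2$ and so does not contain the $\ell_\infty^n$ uniformly. The ``hence'' would not follow from that claim anyway.

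\paragraph{The gap in (1)$\Rightarrow$(3).} The family you sketch fails. Let $\phi_1,\dots,\phi_m\in\ell_\infty^{2^m}$ be the images of the unit vectors of $\ell_1^m$ under the isometric Rademacher embedding. Then $\norm{\frac1m\sum_i\phi_i}_\infty=1$, which is not small. Worse, the coordinate vector $e_{(1,\dots,1)}$ of $\ell_1^{2^m}$ has norm $1$ and lies in every half-space $\braces{x\st\iprod{\phi_i}{x}\ge 1}$, so this family has Helly radius $0$. The missing idea is to use functionals that \emph{sum to zero} while every $k$-subaverage still has norm bounded below. This requires centring them and lowering the threshold.

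\paragraph{How the paper does it, and how your route could be completed.} The paper takes $u_i=a_k\bigl(e_i-\frac1{2k}\sum_j e_j\bigr)\in\ell_1^{2k}$ with $a_k=\frac{k}{2k-1}$. These have norm $1$, sum to $0$, and satisfy $\iprod{u_i}{x_J}=-a_k$ for $i\in J$, where $x_J\in\braces{\pm1}^{2k}$ equals $-1$ exactly on the $k$-set $J$. With $K_i=\conv\braces{x_J\st i\in J}$, every $k$ of the sets share the witness $x_J$. Averaging $\iprod{u_i}{y}\le -a_k+r$ over $i$ gives $r\ge a_k$. The paper then embeds $\ell_1^{2k}$ almost isometrically into $X^*$, using that $X^*$ has trivial type. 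It realizes the witnesses in $\ball{}_X$ by Goldstine's theorem, so the functionals live on all of $X$ and the lower bound transfers without loss.

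Your route through $\ell_1^n\subset X$ can be completed with the same functionals for $n=2^{2k}$. Put $\phi_i=a_k\bigl(r_i-\frac1{2k}\sum_j r_j\bigr)$, with $r_i$ the Rademacher vectors in $\ell_\infty^{2^{2k}}$, which span $\ell_1^{2k}$ isometrically. Use as witnesses the coordinate vectors of $\ell_1^{2^{2k}}$ indexed by balanced sign patterns.

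\paragraph{The transfer step.} Your Hahn--Banach fix does not work as stated. Individually extended functionals need not sum to zero on $X$, and the averaging step needs this for arbitrary $y\in X$. The fix is also unnecessary. If $y\in X$ is within $r$ of every $K_i\subset E$ and $z_1\in K_1$ satisfies $\norm{y-z_1}\le r+\delta$, then $z_1\in E$ is within $2r+2\delta$ of every $K_i$. So the lower bound survives up to a factor $2$ and the isomorphism constant.
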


We prove in this paper only the equivalence of the first two properties.
The equivalence of \(X^*\) being of non-trivial type and \(X\) being of non-trivial type is not new.
It was pointed out to the author by Alexandros Eskenazis.
It follows from the celebrated result of Pisier \cite[Theorem~2.1]{pisier1982holomorphic}
on the equivalence between non-trivial type and \(K\)-convexity, together with the standard fact
that a Banach space is \(K\)-convex if and only if its dual is \(K\)-convex.
We also refer to \cite{fackler2011holomorphic} for a proof of this duality fact and for a clear exposition of the topic.

In particular, it follows that the Helly approximation property holds precisely in the same class of Banach spaces in which the Maurey sampling argument is available, namely, the spaces of non-trivial type. 

Finally, let us discuss the finite dimensional case. In \cite{ivanov2026nodimensionalresults}, the author showed that several classical $\ell_1$-approximation problem admit ``local-to-global'' estimates via no-dimensional Helly-type results. 
The argument from \cite{ivanov2026nodimensionalresults} yields
\[
r_k \! \parenth{\ell_1^n}\le C \sqrt{\frac{\ln n}{k}},
\]
by approximating \(\ell_1^n\) by \(\ell_q^n\) for a suitable exponent \(q=q(n)\) and applying the deterministic approach from \cite{Ivanov2025nodimHelly}; on the dual side, this corresponds to replacing \(\ell_\infty^n\) by \(\ell_p^n\), where \(p\) and \(q\) are conjugate exponents. The same estimate also follows from \Href{Theorem}{thm:no_dim_Helly_dual_type}, since \((\ell_1^n)^*=\ell_\infty^n\) and
\[
T_2\!\parenth{\ell_\infty^n}\le C\sqrt{\ln n}.
\]
We conjecture that the bound
\(
r_k\!\parenth{\ell_1^n}\le C \sqrt{\frac{\ln n}{k}}
\)
is optimal.

On the other hand, our examples show that
\[
r_k\!\parenth{\ell_\infty^n}\ge \frac{k}{2k-1}\ge \frac12
\qquad\text{whenever } 2k\le n.
\]
Thus, in \(\ell_\infty^n\) no decay in \(k\) is possible.

\section{Idea behind the proof, type, and the no-dimensional Carathéodory lemma}

\subsection{Reduction to a problem in the dual space}

In this subsection we explain the main idea behind the proof of \Href{Theorem}{thm:no_dim_Helly_dual_type}. We deliberately suppress all technicalities and focus only on the underlying mechanism.

The first simplification is a standard reduction to the finite-dimensional case. The only subtle point is the estimate for the type constant of the dual of a finite-dimensional subspace of the original space. This is a simple exercise from the definition of type, and we provide the proof in the next subsection for completeness.

Now let \(\mathcal{F}\) be our family of convex sets. We want to estimate the smallest radius \(r\) such that some translate of \(r\ball{}\) intersects every set in \(\mathcal{F}\). Equivalently, we consider the convex \(1\)-Lipschitz function
\[
f(x)=\max_{K\in\mathcal{F}} \dist{x}{K},
\]
and choose a point \(p\) at which \(f\) attains its minimum. The value
\[
r=f(p)
\]
is precisely the minimal radius we seek.

In the Euclidean case, the point \(p\) necessarily belongs to the convex hull of the ``active contact points'' on the sphere of radius \(r\) centered at $p$ and on the boundaries of some sets in the family. This folklore observation can be used to derive the classical Helly theorem \cite{helly1923mengen} in \(\R^d\) from the classical Carathéodory lemma \cite{caratheodory1911variabilitatsbereich}. We translate this necessary condition to the Banach-space setting. Using standard facts from convex analysis, we will show that instead of vectors in the original space one should consider functionals in the dual space, namely subgradients of the distance functions.

Since \(p\) is a minimizer, subdifferential calculus gives
\[
0 \in \partial f(p).
\]
Write
\[
\mathcal{A}:=\braces{K\in\mathcal{F} \st \dist{p}{K}=r}
\]
for the family of \emph{active} sets. Since \(f\) is the maximum of the distance functions \(x\mapsto \dist{x}{K}\), the subdifferential of \(f\) at \(p\) is the convex hull of the subdifferentials of the active distance functions. Thus there exist active sets \(K_1,\dots,K_m\in\mathcal{A}\) and functionals
\[
u_1,\dots,u_m\in \ball{}_{X^*}
\]
such that
\[
0\in \conv\braces{u_1,\dots,u_m},
\]
where $\conv$ denotes the convex hull.

After translating the picture, we may assume that \(p=0\). Then, for every active set \(K_i\), the corresponding functional \(u_i\) satisfies
\[
\iprod{u_i}{x}\le -r
\qquad\text{for all } x\in K_i.
\]
In other words, each \(K_i\) is contained in a supporting half-space at distance \(r\) from the origin.

At this point the geometric problem has been converted into a dual one. Suppose that we can find indices \(i_1,\dots,i_k\) such that
\[
\norm{\frac{1}{k}\sum_{t=1}^k u_{i_t}}
\]
is small. By the Helly assumption, the intersection
\[
\ball{}\cap \bigcap_{t=1}^k K_{i_t}
\]
is non-empty; let \(q\) be a point in it. It follows from the triangle inequality, that the norm of the shifted point \(q\) is bounded by an absolute constant, and
\[
\iprod{u_{i_t}}{q}\le -r
\qquad\text{for every } t\in[k].
\]
Averaging, we obtain
\[
r
\le
-\frac{1}{k}\sum_{t=1}^k \iprod{u_{i_t}}{q}
\le
\norm{\frac{1}{k}\sum_{t=1}^k u_{i_t}}\,\norm{q}.
\]
Thus any upper bound on the norm of such an average immediately yields an upper bound on the Helly radius \(r\).

This reduces the proof of \Href{Theorem}{thm:no_dim_Helly_dual_type} to the following dual approximation problem:

\medskip
\noindent
\emph{Given vectors \(u_1,\dots,u_m\in \ball{}_{X^*}\) with \(0\in\conv\braces{u_1,\dots,u_m}\), find \(k\) of them whose average has small norm.}

\medskip

This is precisely a no-dimensional Carathéodory-type statement in the dual space.

The colorful version is obtained in exactly the same spirit. Instead of one function \(f\), one considers a maximal distance function for each color and minimizes their average. The minimizer again produces active subgradients, now grouped by colors, whose total sum is zero. Thus the colorful Helly theorem is reduced to a colorful Carathéodory-type lemma in the dual space.
\subsection{No-dimensional Carathéodory lemma}

A no-dimensional Carathéodory lemma in a Banach space of non-trivial type is known as the celebrated Maurey's lemma \cite{pisier1980remarques}:

\begin{lem}[Maurey's lemma]\label{lem:Maurey_for_Helly}
Let \(Y\) be a Banach space of Rademacher type \(p\in(1,2]\) with constant \(T_p(Y)\). Assume that \(u_1,\dots,u_m\in\ball{}_Y\) and \(0 \in \conv\{u_1,\dots,u_m\}\). Then for every \(k\in\N\) there exist indices \(i_1,\dots,i_k\in[m]\) such that
\[
\norm{
\frac{1}{k}\sum_{t=1}^k u_{i_t}}
\le
2T_p(Y)\,k^{-1+\frac{1}{p}}.
\]
\end{lem}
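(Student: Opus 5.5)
The plan is Maurey's empirical method: random sampling combined with symmetrization. Since \(0\in\conv\{u_1,\dots,u_m\}\), we can fix weights \(\lambda_1,\dots,\lambda_m\ge 0\) with \(\sum_j\lambda_j=1\) and \(\sum_j\lambda_j u_j=0\). Let \(\xi_1,\dots,\xi_k\) be i.i.d.\ random vectors in \(Y\) with \(\xi_t=u_j\) with probability \(\lambda_j\). Then \(\mathbb{E}\,\xi_t=0\) and \(\norm{\xi_t}\le 1\). It is enough to show that
\[
\mathbb{E}\norm{\frac1k\sum_{t=1}^k \xi_t}\le 2T_p(Y)\,k^{-1+\frac1p},
\]
because some realization of \((\xi_1,\dots,\xi_k)\) then satisfies the bound, and that realization gives the required indices \(i_1,\dots,i_k\), repetitions allowed.

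The key step is symmetrization. Let \(\xi'_1,\dots,\xi'_k\) be an independent copy of the sequence. Since \(\mathbb{E}\,\xi'_t=0\), Jensen's inequality (conditioning on \(\xi\) and using convexity of the norm) gives
\[
\mathbb{E}\norm{\sum_t \xi_t}\le \mathbb{E}\norm{\sum_t(\xi_t-\xi'_t)}.
\]
The vectors \(\xi_t-\xi'_t\) are symmetric and independent, so their joint distribution is unchanged if we multiply them by independent Rademacher signs \(\epsilon_t\). Hence
\[
\mathbb{E}\norm{\sum_t(\xi_t-\xi'_t)}=\mathbb{E}\norm{\sum_t\epsilon_t(\xi_t-\xi'_t)}\le 2\,\mathbb{E}\norm{\sum_t\epsilon_t\xi_t},
\]
where the last inequality is the triangle inequality.

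Next, we condition on \(\xi\) and apply the definition of type \(p\) to the finite family \(\{\xi_1,\dots,\xi_k\}\). The average over all sign patterns of \(\norm{\sum\pm\xi_t}\) is at most \(T_p(Y)\bigl(\sum_t\norm{\xi_t}^p\bigr)^{1/p}\le T_p(Y)k^{1/p}\). One small technical point: the definition is stated for a set \(S\) of vectors, while the \(\xi_t\) may repeat. Either the definition is read for lists of vectors, which is the standard meaning, or repeated vectors \(s\) occurring \(r\) times can be handled directly. In the latter case the sum of \(r\) independent signs is a symmetric random multiple \(c\,s\), and by convexity and symmetry the resulting bound is no worse than the one for the list. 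I will simply note that type is understood for finite sequences.

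Dividing by \(k\) gives the expectation bound \(2T_p(Y)k^{1/p-1}\), which proves the lemma.

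The main obstacle is the symmetrization step. It needs care with conditional expectations, but it is standard and involves only finitely many values, so measurability causes no trouble.
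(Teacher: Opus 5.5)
Your proposal is correct and is essentially the paper's argument. The paper proves the colorful Maurey lemma, which reduces to your statement when every color class is $\{u_1,\dots,u_m\}$ with $a_i=0$, and it uses the same steps: sampling by the convex weights, Jensen with an independent copy, Rademacher symmetrization, the triangle inequality, and then type. The only cosmetic difference is that the paper first passes to the $p$-th moment via Jensen and applies type in $L_p$-form, while you apply the type inequality directly in its averaged-norm form, which is the form in which the paper actually states the definition.
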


It admits a one-line proof. For the sake of completeness, we provide the proof of the following colorful variant, which will be used in the proof of \Href{Theorem}{thm:colorful_no_dim_Helly_dual_type} (see also \cite[Lemma~2.1]{ivanov2021no}):

\begin{lem}[Colorful Maurey's lemma]\label{lem:colorful_dual_Maurey_Helly}
Let \(Y\) be a Banach space of type \(p\in(1,2]\) with constant \(T_p(Y)\). For each \(i\in[k]\), let
\(
u_{i,1},\dots,u_{i,m_i}\in \ball{}_Y
\)
be such that
\(
a_i\in \conv\braces{u_{i,1},\dots,u_{i,m_i}},
\)
and assume that
\[
\sum_{i=1}^k a_i=0.
\]
Then there exist indices \(t_i \in [m_i]\), \(i\in[k]\), such that
\[
\norm{\frac1k\sum_{i=1}^k u_{i,t_i}}
\le 2T_p(Y)\,k^{-1+\frac1p}.
\]
\end{lem}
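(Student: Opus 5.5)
We will use Maurey's empirical method, that is, independent random sampling followed by symmetrization. For each color $i\in[k]$, write $a_i=\sum_{j=1}^{m_i}\lambda_{i,j}u_{i,j}$ with $\lambda_{i,j}\ge 0$ and $\sum_j\lambda_{i,j}=1$. Choose independent random vectors $\xi_1,\dots,\xi_k$ in $Y$, where $\xi_i$ takes the value $u_{i,j}$ with probability $\lambda_{i,j}$. Then $\mathbb{E}\,\xi_i=a_i$ and $\norm{\xi_i}\le 1$. Since $\sum_i a_i=0$,
\[
\sum_{i=1}^k \xi_i=\sum_{i=1}^k(\xi_i-a_i).
\]
So it suffices to show that
\[
\mathbb{E}\norm{\sum_{i=1}^k(\xi_i-a_i)}\le 2T_p(Y)\,k^{1/p}.
\]
Some realization $\xi_i=u_{i,t_i}$ then satisfies the same bound, and dividing by $k$ gives the claim.

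To prove the expectation bound, let $\xi_1',\dots,\xi_k'$ be an independent copy of $\xi_1,\dots,\xi_k$, and let $\epsilon_1,\dots,\epsilon_k$ be independent Rademacher signs, independent of everything else. The key steps are as follows.

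First, since $a_i=\mathbb{E}\,\xi_i'$, Jensen's inequality (convexity of the norm, applied conditionally on $\xi$) gives
\[
\mathbb{E}\norm{\sum_i(\xi_i-a_i)}\le \mathbb{E}\norm{\sum_i(\xi_i-\xi_i')}.
\]

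Second, each $\xi_i-\xi_i'$ is symmetric, and the pairs are independent across $i$. Hence $\sum_i(\xi_i-\xi_i')$ has the same distribution as $\sum_i\epsilon_i(\xi_i-\xi_i')$. The triangle inequality then bounds its expected norm by
\[
2\,\mathbb{E}\norm{\sum_i\epsilon_i\xi_i}.
\]

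Third, condition on $\xi$. The expectation over the signs is exactly the average over the $2^k$ sign patterns in the definition of type, applied to the finite collection $\xi_1,\dots,\xi_k$. It is therefore at most
\[
T_p(Y)\parenth{\sum_i\norm{\xi_i}^p}^{1/p}\le T_p(Y)\,k^{1/p}.
\]
Combining the three steps yields $2T_p(Y)\,k^{1/p}$, as required.

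The argument is routine. The only points needing care are the following.
\begin{itemize}
\item The definition of type is stated for a \emph{set} $S$ of vectors, while the $\xi_i$ may repeat or vanish. The averaging argument is unaffected, because the inequality clearly extends to finite sequences; we will note this explicitly, or merge repeated vectors.
\item Everything is a finite probability space, so no measurability issues arise.
\item The final step from the expectation to an existing choice of indices is immediate: a random variable cannot exceed its mean for every realization.
\end{itemize}
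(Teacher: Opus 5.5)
Your proof is correct and is essentially the paper's argument (sampling according to the convex weights, symmetrization with an independent copy and Rademacher signs, then type). The one difference is that the paper first passes to $p$-th moments via Jensen and uses type in the moment form $\mathbb{E}\norm{\sum_i \epsilon_i U_i}^p \le T_p(Y)^p \sum_i \norm{U_i}^p$, whereas you apply the average-norm inequality conditionally on $\xi$, which is exactly the definition of type stated in the paper and needs no comparison of moments. If you spell out the passage from sets to sequences, perturb repeated vectors and use continuity rather than merging them, since replacing two copies of $x$ by $2x$ turns $2\norm{x}^p$ into $2^p\norm{x}^p$ and loses the constant.
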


\begin{proof}
For each \(i\in[k]\), choose non-negative coefficients \(\lambda_{i,1},\dots,\lambda_{i,m_i}\ge 0\) such that
\[
\sum_{j=1}^{m_i}\lambda_{i,j}=1
\qquad\text{and}\qquad
a_i=\sum_{j=1}^{m_i}\lambda_{i,j}u_{i,j}.
\]
Let \(U_1,\dots,U_k\) be independent random vectors such that
\[
\mathbb{P}\parenth{U_i=u_{i,j}}=\lambda_{i,j}
\qquad\text{for } i\in[k], \ j\in[m_i].
\]
Then
\[
\mathbb{E}U_i=a_i
\qquad\text{and}\qquad
\norm{U_i} \le 1
\]
almost surely. Since \(\sum_{i=1}^k a_i=0\), we have
\begin{equation}
\label{eq:col_Maurey_average_expec}
\mathbb{E}\parenth{\frac1k\sum_{i=1}^k U_i}=0.
\end{equation}

Let \(U_1',\dots,U_k'\) be an independent copy of \(U_1,\dots,U_k\), and let \(\varepsilon_1,\dots,\varepsilon_k\) be independent Rademacher random variables, independent of everything else. Then
\begin{align*}
\mathbb{E}\norm{\frac1k\sum_{i=1}^k U_i}
&\stackrel{\eqref{eq:col_Maurey_average_expec}}{=}
\mathbb{E}\norm{\frac1k\sum_{i=1}^k \parenth{U_i-\mathbb{E}U_i'}} \stackrel{(\text{Jensen})}{\le}
\mathbb{E}\norm{\frac1k\sum_{i=1}^k \parenth{U_i-U_i'}} \stackrel{\text{(symm.)}}{=}
\mathbb{E}\norm{\frac1k\sum_{i=1}^k \varepsilon_i \parenth{U_i-U_i'}} \\
&\le
\frac1k\,\mathbb{E}\norm{\sum_{i=1}^k \varepsilon_i U_i}
+
\frac1k\,\mathbb{E}\norm{\sum_{i=1}^k \varepsilon_i U_i'} =
\frac{2}{k}\,\mathbb{E}\norm{\sum_{i=1}^k \varepsilon_i U_i},
\end{align*}
where in step \((\text{Jensen})\) we used Jensen's inequality, and step \(\text{(symm.)}\) follows from symmetry.
Using Jensen's inequality and the type \(p\) property of \(Y\), we obtain
\begin{align*}
\mathbb{E}\norm{\sum_{i=1}^k \varepsilon_i U_i}
&\le
\parenth{\mathbb{E}\norm{\sum_{i=1}^k \varepsilon_i U_i}^p}^{1/p} \le
\parenth{\mathbb{E}\braces{T_p(Y)^p \sum_{i=1}^k \norm{U_i}^p}}^{1/p} \le
T_p(Y)\,k^{1/p}.
\end{align*}
Therefore,
\[
\mathbb{E}\norm{\frac1k\sum_{i=1}^k U_i}
\le
2T_p(Y)\,k^{-1+\frac1p}.
\]
Hence there exists a realization \(U_i=u_{i,t_i}\), \(i\in[k]\), such that
\[
\norm{\frac1k\sum_{i=1}^k u_{i,t_i}}
\le
2T_p(Y)\,k^{-1+\frac1p}.
\]
\end{proof}

\begin{rem}
Maurey's lemma guarantees that the corresponding averages converge to the origin. It is now understood that such an approximation property (which one may call the Carathéodory approximation property) holds if and only if the space is of non-trivial type \cite{artstein2025b}. Together with our yet-to-be-proven \Href{Theorem}{thm:Helly_approx_prop_characterization}, this shows a certain duality between no-dimensional Helly- and Carathéodory-type results. As was explained in detail in \cite{Ivanov2025nodimHelly}, deterministic proofs of no-dimensional Helly and Carathéodory results \cite{ivanov2021approximate} use inequalities dual to each other and work in classes of spaces related by duality as well.
\end{rem}

\subsection{Properties of the Rademacher type}

In our proofs we will reduce everything to finite-dimensional subspaces. For this reason, we need two auxiliary facts. The first one shows that, when we pass to a finite-dimensional subspace, the type constant of the dual does not increase. The second one is the Maurey--Pisier result from \cite{maurey1976series} in the form needed for our counterexample.

\begin{lem}\label{lem:type_of_dual_subspace}
Let \(X\) be a Banach space, and let \(E \subset X\) be a finite-dimensional subspace. Assume that \(X^*\) has Rademacher type \(p \in [1,2]\). Then \(E^*\) has Rademacher type \(p\), and
\[
T_p\parenth{E^*}\le T_p\parenth{X^*}.
\]
\end{lem}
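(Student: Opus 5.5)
The idea is to identify \(E^*\) isometrically with a quotient of \(X^*\) and then show that type passes to quotients without increasing the constant. By the Hahn--Banach theorem, the restriction map \(R\colon X^*\to E^*\), \(R\phi=\phi|_E\), is onto. It also maps the open unit ball of \(X^*\) onto the open unit ball of \(E^*\). Indeed, every \(\psi\in E^*\) has a norm-preserving extension \(\phi\in X^*\) with \(R\phi=\psi\) and \(\norm{\phi}_{X^*}=\norm{\psi}_{E^*}\). Moreover, \(\norm{R\phi}\le\norm{\phi}\) always holds. Thus \(E^*\cong X^*/E^\perp\) isometrically, and the quotient norm is attained.

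Now fix a finite set \(\psi_1,\dots,\psi_n\in E^*\). For each \(i\), choose a norm-preserving extension \(\phi_i\in X^*\), so that \(\norm{\phi_i}=\norm{\psi_i}\). For every choice of signs, \(R\bigl(\sum_i\pm\phi_i\bigr)=\sum_i\pm\psi_i\). Since \(R\) is a contraction,
\[
\norm{\sum_{i}\pm\psi_i}_{E^*}\le\norm{\sum_i\pm\phi_i}_{X^*}.
\]
We average over all \(2^n\) sign patterns and apply the type \(p\) inequality in \(X^*\). This gives
\[
\mathbb{E}\norm{\sum_i\varepsilon_i\psi_i}\le\mathbb{E}\norm{\sum_i\varepsilon_i\phi_i}\le T_p(X^*)\parenth{\sum_i\norm{\phi_i}^p}^{1/p}=T_p(X^*)\parenth{\sum_i\norm{\psi_i}^p}^{1/p}.
\]
Hence \(E^*\) has type \(p\) with \(T_p(E^*)\le T_p(X^*)\). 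The multiset convention for \(S\) in the definition causes no issue, since the inequality is applied to exactly the extended vectors.

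There is no real obstacle here. The only point needing care is that the extensions are exactly norm-preserving, so that no \(\varepsilon\)-loss enters the constant. Hahn--Banach gives this directly for real (and complex) Banach spaces, so finite-dimensionality of \(E\) is not even needed.
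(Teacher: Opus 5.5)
Your proof is correct and follows essentially the paper's argument: identify \(E^*\) with the quotient \(X^*/E^\perp\) via restriction, lift the vectors to \(X^*\), apply the type inequality there, and push back down through a norm-one map. The only difference is that you use exact norm-preserving Hahn--Banach extensions and compare sign pattern by sign pattern, whereas the paper proves the general quotient bound \(T_p(Y/M)\le T_p(Y)\) using \(\varepsilon\)-approximate lifts and a limiting step. Your version needs no limit and works whichever moment is used in the definition of type.
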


\begin{proof}
Consider the annihilator
\[
E^\perp:=\braces{\phi\in X^* \st \phi|_E=0}.
\]
It is standard that \(E^*\) is isometrically isomorphic to the quotient \(X^*/E^\perp\). Indeed, define
\[
Q:X^*\to E^*,
\qquad
Q(\phi):=\phi|_E.
\]
Then \(Q\) is onto, \(\ker Q=E^\perp\), and, by the Hahn--Banach theorem \cite[Chapter~3]{rudin1991functional}, the induced map
\[
\widetilde{Q}:X^*/E^\perp \to E^*
\]
is an isometric isomorphism.

Thus, it is enough to show that the Rademacher type \(p\) constant does not increase when passing to a quotient. Let \(Y\) be a Banach space of type \(p\), let \(M\subset Y\) be a closed subspace, and let
\[
\pi:Y\to Y/M
\]
be the quotient map. By the definition of the quotient norm,
\[
\norm{\pi}\le 1.
\]
Take arbitrary vectors \(z_1,\dots,z_N\in Y/M\), and fix \(\varepsilon>0\). For every \(i\in[N]\), choose \(y_i\in Y\) such that
\[
\pi y_i=z_i
\qquad\text{and}\qquad
\norm{y_i}\le \norm{z_i}+\varepsilon.
\]
Then, for independent Rademacher random variables \(\varepsilon_1,\dots,\varepsilon_N\), we have
\begin{align*}
\mathbb{E}\norm{\sum_{i=1}^N \varepsilon_i z_i}^p
&=
\mathbb{E}\norm{\pi\parenth{\sum_{i=1}^N \varepsilon_i y_i}}^p \le
\norm{\pi}^p\,\mathbb{E}\norm{\sum_{i=1}^N \varepsilon_i y_i}^p \le
\mathbb{E}\norm{\sum_{i=1}^N \varepsilon_i y_i}^p \\
&\le
T_p(Y)^p \sum_{i=1}^N \norm{y_i}^p \le
T_p(Y)^p \sum_{i=1}^N \parenth{\norm{z_i}+\varepsilon}^p.
\end{align*}
Letting \(\varepsilon\to 0\), we obtain
\[
\mathbb{E}\norm{\sum_{i=1}^N \varepsilon_i z_i}^p
\le
T_p(Y)^p \sum_{i=1}^N \norm{z_i}^p.
\]
Hence
\[
T_p\parenth{Y/M}\le T_p(Y).
\]

Applying this to \(Y=X^*\) and \(M=E^\perp\), and using the isometric identification
\[
E^*\cong X^*/E^\perp,
\]
we conclude that \(E^*\) has type \(p\) and
\[
T_p\parenth{E^*}\le T_p\parenth{X^*}.
\]
\end{proof}

By the Maurey--Pisier result \cite[Theorem~2.1]{maurey1976series}, a Banach space has non-trivial Rademacher type if and only if it does not contain \(\ell_1^n\) uniformly. Equivalently, if it has only trivial type, then the spaces \(\ell_1^n\) are uniformly finitely representable in it. Suppressing the formal definitions, the following statement is a consequence of \cite[Theorem~2.1]{maurey1976series}:

\begin{prp}\label{prp:trivial_type_l1_representation}
Let \(Y\) be a Banach space of trivial type. Then for every \(\eta>0\) and every \(m\in\N\) there exists an \(m\)-dimensional subspace \(F\subset Y\) and an isomorphism
\[
T:\ell_1^m \to F
\]
such that
\[
\norm{T}\le 1
\qquad\text{and}\qquad
\norm{T^{-1}}\le 1+\eta.
\]
\end{prp}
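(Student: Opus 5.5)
This statement is quoted from the Maurey--Pisier theorem rather than proved from scratch, so the plan is to take the ``uniform finite representability'' form of \cite[Theorem~2.1]{maurey1976series} and unwind it into the stated normalization. First, fix what ``trivial type'' means here: for every \(p>1\) the space \(Y\) fails type \(p\). Equivalently, the supremum \(p(Y)\) of admissible types equals \(1\). The Maurey--Pisier theorem says \(p(Y)\) is exactly the infimum of those \(q\) for which \(\ell_q\) is finitely representable in \(Y\). With \(p(Y)=1\), this means \(\ell_1\) is finitely representable in \(Y\). Concretely: for every \(m\) and every \(\delta>0\) there are a subspace \(F\subset Y\) and a linear bijection \(S:\ell_1^m\to F\) with \(\norm{S}\,\norm{S^{-1}}\le 1+\delta\).

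Second, renormalize. Replace \(S\) by \(T:=S/\norm{S}\). Then \(\norm{T}=1\) and \(\norm{T^{-1}}=\norm{S}\,\norm{S^{-1}}\le 1+\delta\). Choosing \(\delta=\eta\) gives the claim. The dimension of \(F\) equals \(m\), because \(T\) is injective on the \(m\)-dimensional space \(\ell_1^m\).

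If one wants to avoid citing the full \(\ell_q\)-spectrum statement, I would instead use the more elementary half of Pisier's argument. This is the characterization of non-trivial type as ``\(Y\) does not contain \(\ell_1^n\)'s uniformly'': if for some \(n\) and \(\lambda<\ldots\) no \((1+\lambda)\)-copy of \(\ell_1^n\) exists, then \(Y\) has some type \(p>1\). I would outline it as follows.
\begin{itemize}
\item Define \(a_n\) as the best constant with \(\mathbb{E}\norm{\sum_{i\le n}\varepsilon_i y_i}\le a_n \max_i\norm{y_i}\).
\item Show that \(a_n\) is submultiplicative, by splitting into blocks.
\item Note that \(a_n<n\) for a single \(n\) forces \(a_N\le CN^{1/p'}\) for some \(p'<1\)-exponent. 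This yields type \(p\) for some \(p>1\).
\end{itemize}
By contraposition, trivial type forces \(a_n=n\) for all \(n\). Then a compactness/averaging step produces vectors \(y_1,\dots,y_m\) of norm at most \(1\) with \(\norm{\sum \pm y_i}\ge (1-\delta)m\) for all signs.

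Third, pass from sign-sums to arbitrary coefficients. Here I would use the James-type blocking trick: from \(\norm{\sum \pm y_i}\ge (1-\delta)m\) one extracts blocks whose normalized sums satisfy a lower \(\ell_1\)-estimate \(\norm{\sum c_j x_j}\ge(1-\eta)\sum|c_j|\).

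The main obstacle is this last step, converting the sign-sum lower bound into a genuine lower \(\ell_1\)-estimate for all real coefficients with constant close to \(1\). I expect to handle it by a convexity argument: the extremal functional norming \(\sum\pm y_i\) nearly norms each \(\pm y_i\) simultaneously. Honestly, in the paper this step is simply delegated to the cited theorem, which is what I would do.
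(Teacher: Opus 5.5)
Your argument is correct and is exactly what the paper does: the paper simply invokes the Maurey--Pisier theorem (trivial type forces \(\ell_1\) to be finitely representable in \(Y\)) and leaves the unwinding implicit, and your rescaling \(T=S/\norm{S}\), with \(\norm{T}=1\) and \(\norm{T^{-1}}=\norm{S}\,\norm{S^{-1}}\le 1+\eta\), is precisely that unwinding. Your optional elementary sketch is not needed, but if you pursue it, note two things: splitting into blocks proves submultiplicativity for the \(\ell_2\)-normalized constant \(\tau_n\) defined by \(\bigl(\mathbb{E}\norm{\sum_{i\le n}\varepsilon_i y_i}^2\bigr)^{1/2}\le \tau_n\bigl(\sum_{i\le n}\norm{y_i}^2\bigr)^{1/2}\), but not for your \(\max\)-normalized \(a_n\) (the block sums are random, so you only control \(\mathbb{E}\max_j\) instead of \(\max_j\mathbb{E}\)); and the step you call the main obstacle is immediate from your norming-functional idea once \(\delta\le\eta/m\), because a norming functional \(f\) of \(\sum_i s_i y_i\), with \(s_i\) the sign of \(c_i\), satisfies \(f(s_i y_i)\ge 1-m\delta\) for every \(i\), whence \(\norm{\sum_i c_i y_i}\ge(1-m\delta)\sum_i\card{c_i}\).
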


\section{Proofs of no-dimensional Helly-type results}

\begin{proof}[Proof of \Href{Theorem}{thm:colorful_no_dim_Helly_dual_type}]

The proof consists of several simple steps.

\medskip
\noindent
\textbf{Step 1. Reduction to the finite-dimensional case.}

First, we simplify the picture by replacing each convex set by the convex hull of finitely many witness points.
We replace each set \(K\) from one of the families by the convex hull of points
\(x_{K, K_1, \dots, K_{k-1}}\) from the intersection \(\ball{}_X \cap K \cap \bigcap\limits_{i=1}^{k-1} K_i\), where,  to form the \(k\)-tuple of sets \(\{K, K_1, \dots, K_{k-1}\}\), we pick one set from each family.
Thus, each \(K\) becomes a compact convex polytope inside \(\ball{}_X\).
Let \(E\) be the linear hull of all newly constructed sets.
It is a finite-dimensional subspace of \(X\).

By \Href{Lemma}{lem:type_of_dual_subspace}, it suffices to find a point \(c\) inside \(\ball{}_E\) such that the families deviate from it by at most
\[
6T_p(E^*)\,k^{-1+\frac{1}{p}}
\]
on average in order to prove \Href{Theorem}{thm:colorful_no_dim_Helly_dual_type}.

From now on, we work in the finite-dimensional space \(E\), and our families consist of convex polytopes inside the unit ball \(\ball{}_E\).

\medskip
\noindent
\textbf{Step 2. Necessary conditions for the best ball.}

We recall a few facts from convex analysis \cite{rockafellar1970convex}.
We use the standard notion of \emph{subdifferential}:

If \(g:E\to\R\) is convex, then the subdifferential \(\partial g(x)\) at \(x\) is defined by
\[
\partial g(x):=
\braces{
\psi\in E^* \st
g(y)\ge g(x)+\iprod{\psi}{y-x}
\text{ for all } y\in E };
\]
elements of the subdifferential are called \emph{subgradients}.

We will use the following standard facts:
\begin{enumerate}
\item if \(x\) minimizes \(g\), then \(0\in\partial g(x)\);
\item if \(g_1,\dots,g_m\) are convex and continuous, then
\[
\partial(g_1+\cdots+g_m)(x)
=
\partial g_1(x)+\cdots+\partial g_m(x);
\]
\item if \(g=\max\braces{g_1,\dots,g_m}\), then
\[
\partial g(x)
=
\conv\Bigl(
\bigcup_{j\in I(x)} \partial g_j(x)
\Bigr),
\]
where
\[
I(x):=\braces{j\in[m]\st g_j(x)=g(x)}.
\]
\end{enumerate}

\medskip
We continue with the proof of the theorem.
Define
\[
f_i(x):=\max_{K\in\mathcal{F}_i}\dist{x}{K},
\]
and set
\[
F(x):=\frac{1}{k} \sum_{i=1}^k f_i(x).
\]

Since \(E\) is finite-dimensional, we conclude that \(F\) attains its minimum at some point \(x_0\).
By the triangle inequality and the bounds \(f_i(0)\le 1\) for all \(i\in[k]\), we conclude that \(x_0 \in 2\ball{}_E\).

Define \(r_i = f_i(x_0)\) and the ``active'' subfamilies
\[
\mathcal{A}_i
:=
\braces{K\in\mathcal{F}_i \st \dist{x_0}{K}= r_i},
\qquad i\in[k].
\]

Because \(x_0\) minimizes \(F\), we have
\[
0\in \partial F(x_0).
\]
Using the sum rule for subdifferentials,
\[
\partial F(x_0)
=
\frac1k\bigl(\partial f_1(x_0)+\cdots+\partial f_k(x_0)\bigr).
\]
Hence there exist functionals
\(
a_i\in\partial f_i(x_0),
\ i\in[k], 
\)
 \ such that \
\(
\sum\limits_{i=1}^k a_i=0.
\)

Since each \(f_i\) is the maximum of distance functions (which are convex), for every \(i\in[k]\) there exist active sets
\[
K_{i,1},\dots,K_{i,m_i}\in\mathcal{A}_i
\]
and subgradients
\[
\psi_{i,1} \in \partial d_{K_{i,1}}(x_0),
\qquad
\dots,
\qquad
\psi_{i,m_i} \in \partial d_{K_{i,m_i}}(x_0),
\]
where \(d_{K_{i,j}}\) denotes the distance function \(\dist{\cdot}{K_{i,j}}\), such that
\[
a_i\in \conv\braces{\psi_{i,1},\dots,\psi_{i,m_i}}.
\]
Additionally, since the distance function to a convex set is \(1\)-Lipschitz, all the resulting \(\psi_{i,j}\) belong to the unit ball \(\ball{}_{E^*}\).

Second, for every \(x\in K_{i,j}\) we have \(d_{K_{i,j}}(x)=0\). Since \(\psi_{i,j}\in\partial d_{K_{i,j}}(x_0)\), the subgradient inequality yields
\[
0=d_{K_{i,j}}(x)
\ge
d_{K_{i,j}}(x_0)+\iprod{\psi_{i,j}}{x-x_0}
=
r_i+\iprod{\psi_{i,j}}{x-x_0}.
\]
Hence,
\begin{equation}
\label{eq:subgrad_ineq_colorful_Helly}
\iprod{\psi_{i,j}}{x-x_0}\le -r_i
\qquad\text{for all } x\in K_{i,j}.
\end{equation}

\medskip
\noindent
\textbf{Step 3. Use of the no-dimensional Carathéodory lemma.}

Now apply \Href{Lemma}{lem:colorful_dual_Maurey_Helly} in the space \(E^*\) to the families
\[
\braces{\psi_{i,1},\dots,\psi_{i,m_i}},
\qquad i\in[k],
\]
with the points \(a_i\). Since \(\sum_{i=1}^k a_i=0\), there exist indices
\(
t_i\in[m_i],
\ i\in[k],
\)
such that
\[
\norm{
\frac1k\sum_{i=1}^k \psi_{i,t_i}
}
\le
2T_p(E^*)\,k^{-1+\frac1p}
\stackrel{\Href{Lemma}{lem:type_of_dual_subspace}}{\le}
2T_p(X^*)\,k^{-1+\frac1p}
=
\frac13\,r_k(X).
\]

For every \(i\in[k]\), the set \(K_{i,t_i}\) belongs to \(\mathcal{F}_i\). By the hypothesis, there exists a ``rainbow'' witness point
\[
q\in \ball{}_E\cap \bigcap_{i=1}^k K_{i,t_i}.
\]
Since \(\norm{x_0}\le 2\), we have
\[
\norm{q-x_0}\le \norm{q}+\norm{x_0}\le 3.
\]

Averaging inequalities \eqref{eq:subgrad_ineq_colorful_Helly}, we obtain
\[
\frac1k\sum_{i=1}^k r_i
\le
-\frac1k\sum_{i=1}^k \iprod{\psi_{i,t_i}}{q-x_0}
=
\iprod{
\frac1k\sum_{i=1}^k \psi_{i,t_i}
}{x_0-q}.
\]
Hence
\[
\frac1k\sum_{i=1}^k r_i
\le
\Bigl\|
\frac1k\sum_{i=1}^k \psi_{i,t_i}
\Bigr\|
\,\norm{x_0-q}
\le
\frac13\,r_k(X)\cdot 3
=
r_k(X).
\]
The desired inequality follows.
Finally, one of the radii, say \(r_{i_0}\), is at most the average.
This proves the theorem.
\end{proof}

\begin{rem}
Using explicit bounds in the no-dimensional Carathéodory lemma in the Euclidean case, it follows that the average of the radii is at most \(\frac{1}{\sqrt{k}}\) in the Euclidean case, without any additional multiplicative constants.
\end{rem}

\subsection{Proof of the non-colorful theorem}

\begin{proof}[Proof of \Href{Theorem}{thm:no_dim_Helly_dual_type}]
Apply \Href{Theorem}{thm:colorful_no_dim_Helly_dual_type} to the coinciding families
\[
\mathcal{F}_1=\cdots=\mathcal{F}_k=\mathcal{F}.
\]
Then every ``rainbow'' choice is simply a choice of \(k\) sets from \(\mathcal{F}\), so the hypothesis of the colorful theorem becomes exactly the hypothesis of \Href{Theorem}{thm:no_dim_Helly_dual_type}. 
\end{proof}

\section{Counterexample when the dual is of trivial type}
\label{sec:counterexample_Helly_trivial_type}

In this section we show that the assumption that \(X^*\) has non-trivial type is  necessary for the Helly approximation property. We first construct an explicit example in \(\ell_\infty^{2k}\), and then transfer it to an arbitrary Banach space whose dual has only trivial type.

\subsection{A counterexample in \(\ell_\infty\)}

\begin{lem}\label{lem:linfty_counterexample}
For every \(k \in \N\) there exist compact convex sets
\(
K_1,\dots,K_{2k} \subset \ell_\infty^{2k}
\)
such that the intersection
\(
\ball{}_{\ell_\infty^{2k}}\cap\bigcap_{i \in J} K_i 
\) is non-empty
{for every } 
\(
J \subset [2k] \text{ with } \card{J}=k,
\)
but
\[
\inf_{y \in \ell_\infty^{2k}}\max_{i \in [2k]} \dist{y}{K_i} \geq \frac{k}{2k-1}.
\]
In particular, there is no no-dimensional Helly approximation sequence in \(\ell_\infty\) that tends to \(0\).
\end{lem}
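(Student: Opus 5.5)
The plan is to build the sets as slices of the unit ball by half-spaces, chosen via the duality of Section~2. By the computation in Step~3 of the proof of \Href{Theorem}{thm:colorful_no_dim_Helly_dual_type}, the Helly radius stays large when there are functionals $u_1,\dots,u_{2k}$ with $\sum_i u_i=0$, no renormalised average of which is small. So we want the failure of Maurey's lemma in $(\ell_\infty^{2k})^*=\ell_1^{2k}$ to produce the counterexample.

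\textbf{The functionals and sets.} Let $\mathbf 1=(1,\dots,1)$ and take
\[
u_i:=e_i-\tfrac{1}{2k}\mathbf 1\in\ell_1^{2k},\qquad i\in[2k].
\]
Then $\sum_{i=1}^{2k}u_i=0$ and
\[
\norm{u_i}_1=\parenth{1-\tfrac1{2k}}+(2k-1)\tfrac1{2k}=\tfrac{2k-1}{k}.
\]
Define
\[
K_i:=\ball{}_{\ell_\infty^{2k}}\cap\braces{x\st \iprod{u_i}{x}\ge 1}.
\]
Each $K_i$ is compact and convex, being a closed half-space intersected with the compact ball.

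\textbf{Witness points.} Fix $J\subset[2k]$ with $\card{J}=k$, and set $x_J:=\mathbf 1_J-\mathbf 1_{[2k]\setminus J}$. Then $\norm{x_J}_\infty=1$, and since $\card{J}=k$ we have $\iprod{\mathbf 1}{x_J}=0$. Hence $\iprod{u_i}{x_J}=(x_J)_i=1$ for every $i\in J$, so $x_J\in\ball{}_{\ell_\infty^{2k}}\cap\bigcap_{i\in J}K_i$.

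\textbf{Lower bound on the radius.} Take any $y\in\ell_\infty^{2k}$. Since $\sum_i\iprod{u_i}{y}=0$, some index $i$ has $\iprod{u_i}{y}\le 0$. For every $x\in K_i$ we then get
\[
1\le\iprod{u_i}{x}-\iprod{u_i}{y}\le\norm{u_i}_1\norm{x-y}_\infty .
\]
Therefore
\[
\dist{y}{K_i}\ge \frac{1}{\norm{u_i}_1}=\frac{k}{2k-1},
\]
and so $\max_i\dist{y}{K_i}\ge k/(2k-1)$ for every $y$, which is the claimed bound.

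\textbf{The ``in particular'' clause.} For each $k$ we apply the construction in $\ell_\infty^{n}$ with $n\ge 2k$, using the first $2k$ coordinates. Replacing $\mathbf 1$ by $\mathbf 1_{[2k]}$ in the definitions, the same estimates hold verbatim, since only those coordinates enter. For $\ell_\infty$ itself we use the coordinate embedding of $\ell_\infty^{2k}$. The sets live in that subspace, and the norm-one coordinate projection onto it does not increase distances, so the infimum over the whole space is the same. Consequently any Helly approximation sequence satisfies $r_k\ge k/(2k-1)\ge\frac12$, and it cannot tend to $0$.

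The main obstacle is finding the right functionals, namely $u_i$ summing to zero with $\ell_1$-norm close to $2$ while each equals exactly $1$ at the balanced sign vectors. Once these are chosen, every verification above is a one-line computation.
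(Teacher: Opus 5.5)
Your proof is correct and follows essentially the paper's route. You use the same functionals $e_i-\frac{1}{2k}\mathbf 1$ (the paper rescales them to $\ell_1$-norm one), the same balanced sign vectors as witnesses (up to sign), and the same duality bound based on $\sum_i u_i=0$. The differences are cosmetic: the paper takes $K_i$ to be the convex hull of the witnesses $x_J$ with $i\in J$, which up to the sign convention sits inside your half-space slice, and it averages the inequalities $\iprod{u_i}{y}\le -a_k+r$ over all $i$ instead of selecting an index with $\iprod{u_i}{y}\le 0$. Your explicit treatment of the ``in particular'' clause via the norm-one coordinate projection is an addition the paper leaves implicit.
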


\begin{proof}
Let \(e_1,\dots,e_{2k}\) be the standard unit vector basis of \(\ell_1^{2k}=(\ell_\infty^{2k})^*\). For \(i\in[2k]\), define
\[
w_i:=e_i-\frac{1}{2k}\sum_{j=1}^{2k}e_j.
\]
Then
\(
\sum_{i=1}^{2k} w_i=0
\qquad\text{and}\qquad
\norm{w_i}_1
=
\parenth{1-\frac{1}{2k}}+(2k-1)\frac{1}{2k}
=
\frac{2k-1}{k}.
\)

Set
\[
a_k:=\frac{k}{2k-1}
\qquad\text{and}\qquad
u_i:=a_k\,w_i.
\]
Then \(u_i\in(\ell_\infty^{2k})^*\), \(\norm{u_i}_1 = 1\), and
\(
\sum_{i=1}^{2k}u_i=0.
\)

For every \(J\subset[2k]\) with \(\card{J}=k\), define \(x_J\in\ell_\infty^{2k}\) by
\[
x_J(t)=
\begin{cases}
-1, & t\in J,\\
1, & t\notin J.
\end{cases}
\]
Clearly,
\(
\norm{x_J}_\infty=1
\)
and 
\(
\sum_{t=1}^{2k}x_J(t)=0.
\)
 Hence, if \(i\in J\), then
\[
\iprod{u_i}{x_J}
=
a_k\parenth{x_J(i)-\frac1{2k}\sum_{t=1}^{2k}x_J(t)}
=
-a_k.
\]

Now define
\[
K_i:=\conv\braces{x_J \st J\subset[2k],\ \card{J}=k,\ i\in J},
\qquad i\in[2k].
\]
Each \(K_i\) is a compact convex subset of \(\ball{}_{\ell_\infty^{2k}}\). Moreover, if \(J\subset[2k]\) and \(\card{J}=k\), then for every \(i\in J\) the point \(x_J\) is one of the vertices used in the definition of \(K_i\). Thus, the witness point $x_J$ belongs to 
\( 
\ball{}_{\ell_\infty^{2k}}\cap\bigcap_{i\in J}K_i,
\)
which proves the \(k\)-wise intersection property.

It remains to prove the lower bound on the approximation radius. By construction,
\[
\iprod{u_i}{z}\le -a_k
\qquad\text{for all } z\in K_i.
\]

Assume now that \(y\in \ell_\infty^{2k}\) and \(r>0\) satisfy
\[
\dist{y}{K_i}\le r
\qquad\text{for all } i\in[2k].
\]
For each \(i\), choose \(z_i\in K_i\) such that
\[
\norm{y-z_i}_\infty\le r.
\]
Since \(\norm{u_i}_1 = 1\) and \(\iprod{u_i}{z_i}\le -a_k\), we obtain
\[
\iprod{u_i}{y}
=
\iprod{u_i}{z_i}+\iprod{u_i}{y-z_i}
\le
-a_k+\norm{y-z_i}_\infty
\le
-a_k+r.
\]
Averaging over \(i\in[2k]\) and using \(\sum_{i=1}^{2k}u_i=0\), we get
\[
0
=
\iprod{\frac1{2k}\sum_{i=1}^{2k}u_i}{y}
\le
-a_k+r.
\]
Hence
\[
r\ge a_k=\frac{k}{2k-1}.
\]
Since \(y\) was arbitrary,
\[
\inf_{y \in \ell_\infty^{2k}}\max_{i \in [2k]} \dist{y}{K_i}
\ge \frac{k}{2k-1}.
\]
\end{proof}

\subsection{Spaces whose dual has only trivial type}

We now transfer the above construction to an arbitrary Banach space whose dual has only trivial type.

\begin{lem}\label{lem:trivial_type_counterexample}
Assume that \(X^*\) has only trivial type. Then for every \(\varepsilon>0\) and every \(k\in\N\) there exist compact convex sets
\(
K_1,\dots,K_{2k}\subset X
\)
such that the intersection
\(
\ball{}_{X}\cap\bigcap_{i \in J} K_i 
\) is non-empty
{for every } 
\(
J \subset [2k] \text{ with } \card{J}=k,
\)
but
\[
\inf_{y\in X}\max_{i\in[2k]}\dist{y}{K_i}
\ge \frac{k}{2k-1}-\varepsilon.
\]
In particular, no no-dimensional Helly approximation sequence in \(X\) can tend to \(0\).
\end{lem}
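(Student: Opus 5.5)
The plan is to transfer the construction of \Href{Lemma}{lem:linfty_counterexample} through an almost isometric copy of \(\ell_1^{2k}\) inside \(X^*\), and then realize the needed ``coordinate'' points in \(X\) itself. Fix \(\eta,\delta>0\) small, to be chosen in terms of \(\varepsilon\). Since \(X^*\) has trivial type, \Href{Proposition}{prp:trivial_type_l1_representation} with \(Y=X^*\) and \(m=2k\) gives \(T:\ell_1^{2k}\to F\subset X^*\) with \(\norm{T}\le 1\) and \(\norm{T^{-1}}\le 1+\eta\). Let \(f_t:=Te_t\in X^*\). Define \(u_i:=T(a_kw_i)\), with \(w_i\) and \(a_k=\frac{k}{2k-1}\) as in the proof of \Href{Lemma}{lem:linfty_counterexample}. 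Then \(\norm{u_i}_{X^*}\le \norm{a_kw_i}_1=1\) and \(\sum_i u_i=0\).

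\textbf{Step 1: realize the points \(x_J\) in \(X\).} For each \(J\subset[2k]\) with \(\card{J}=k\), consider the linear functional \(\phi_J\) on \(F\) defined by \(\phi_J(f_t)=x_J(t)\in\{\pm1\}\). For \(g=T\alpha\in F\),
\[
\card{\phi_J(g)}=\Bigl|\sum_t\alpha_t x_J(t)\Bigr|\le\norm{\alpha}_1\le(1+\eta)\norm{g},
\]
so \(\norm{\phi_J}_{F^*}\le 1+\eta\). I would extend \(\phi_J\) by Hahn--Banach to an element of \(X^{**}\) of the same norm. Then I would apply Helly's lemma on finitely many functionals, the finite-dimensional form of Goldstine's theorem: given \(f_1,\dots,f_{2k}\in X^*\), scalars \(c_t\), and \(\delta>0\), if \(\card{\sum\alpha_tc_t}\le M\norm{\sum\alpha_tf_t}\) for all \(\alpha\), then there is \(x\in X\) with \(\norm{x}\le M+\delta\) and \(\iprod{f_t}{x}=c_t\) for all \(t\). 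This yields \(\tilde x_J\in X\) with \(\iprod{f_t}{\tilde x_J}=x_J(t)\) and \(\norm{\tilde x_J}\le 1+\eta+\delta\). Put \(\lambda:=(1+\eta+\delta)^{-1}\) and \(y_J:=\lambda\tilde x_J\in\ball{}_X\).

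\textbf{Step 2: the sets and the intersection property.} Set \(K_i:=\conv\braces{y_J\st \card{J}=k,\ i\in J}\). These are compact convex sets, and \(y_J\in\ball{}_X\cap\bigcap_{i\in J}K_i\), which gives the \(k\)-wise property. Since \(\iprod{f_t}{y_J}=\lambda x_J(t)\), the computation in \Href{Lemma}{lem:linfty_counterexample} gives \(\iprod{u_i}{y_J}=-\lambda a_k\) whenever \(i\in J\). Hence \(\iprod{u_i}{z}\le-\lambda a_k\) on \(K_i\).

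\textbf{Step 3: the lower bound.} Repeat the averaging argument verbatim. If \(\dist{y}{K_i}\le r\) for all \(i\), choose \(z_i\in K_i\) with \(\norm{y-z_i}\le r\). Then \(\iprod{u_i}{y}\le -\lambda a_k+r\), using \(\norm{u_i}\le1\). Averaging over \(i\) and using \(\sum u_i=0\) gives \(r\ge\lambda a_k\). Choosing \(\eta,\delta\) so that \(a_k/(1+\eta+\delta)\ge a_k-\varepsilon\) finishes the proof. Since \(\frac{k}{2k-1}\ge\frac12\), no Helly approximation sequence can tend to \(0\).

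The main obstacle is Step 1: moving from a copy of \(\ell_1\) in the dual to almost-\(\ell_\infty\) sign patterns in \(X\) itself, with exact values on the \(f_t\) and only slightly enlarged norm. I would rely on Helly's lemma as stated above. If an exact-interpolation version is inconvenient, the fallback is Goldstine's theorem to get approximate values \(\iprod{f_t}{\tilde x_J}\approx x_J(t)\). The resulting error in \(\iprod{u_i}{y_J}\) is at most \(\max_t\card{\iprod{f_t}{\tilde x_J}-x_J(t)}\cdot\norm{a_kw_i}_1\), which can likewise be absorbed into \(\varepsilon\).
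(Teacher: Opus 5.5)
Your proposal is correct and is essentially the paper's proof. Your functional $\phi_J$ on $F$ is exactly the paper's $(T^{-1})^*x_J^0$, with the same norm bound $1+\eta$. Your use of Helly's lemma (exact values, norm at most $1+\eta+\delta$, then rescaling by $\lambda$) is the finite-dimensional form of the paper's Goldstine step, where the paper instead normalizes by $1+\eta$ and accepts a $\delta$-error in the values. The averaging argument in Step~3 is identical to the paper's.
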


\begin{proof}
Set
\[
a_k:=\frac{k}{2k-1}.
\]
Fix \(\varepsilon>0\). Choose \(\eta,\delta>0\) so that
\[
\frac{a_k}{1+\eta}-\delta>a_k-\varepsilon.
\]
By \Href{Proposition}{prp:trivial_type_l1_representation}, there exists a \(2k\)-dimensional subspace \(E\subset X^*\) and an isomorphism
\(
T:\ell_1^{2k}\to E
\)
such that
\(
\norm{T}\le 1
\)
\ and \
\(
\norm{T^{-1}}\le 1+\eta.
\)

Let \(e_1,\dots,e_{2k}\) be the standard unit vector basis of \(\ell_1^{2k}\). As  in \Href{Lemma}{lem:linfty_counterexample}, define
\[
w_i:=e_i-\frac{1}{2k}\sum_{j=1}^{2k}e_j
\qquad\text{and}\qquad
u_i:=a_k\,w_i,
\qquad i\in[2k].
\]
Set
\(
\psi_i:=T u_i \in E\subset X^*.
\)\
Since \(\norm{T}\le1\), we have
\(
\norm{\psi_i}\le1
\)
\ for all \
\(
i\in[2k],
\)
\ and \
\(
\sum_{i=1}^{2k}\psi_i=0.
\)

For every \(J\subset[2k]\) with \(\card{J}=k\), define \(x_J^0\in\ell_\infty^{2k}=(\ell_1^{2k})^*\) by
\[
x_J^0(t)=
\begin{cases}
-1, & t\in J,\\
1, & t\notin J.
\end{cases}
\]
Then \(\norm{x_J^0}_\infty=1\), and, as before, if \(i\in J\), then
\(
\iprod{u_i}{x_J^0}=-a_k.
\)

Consider the adjoint map
\[
(T^{-1})^*:\ell_\infty^{2k}\to E^*.
\]
Since \(\norm{T^{-1}}\le1+\eta\), \
\(
\norm{(T^{-1})^*}\le1+\eta.
\)
Define
\[
y_J^0:=\frac{1}{1+\eta}(T^{-1})^*x_J^0 \in E^*.
\]
Then
\(
\norm{y_J^0}\le1,
\)
and for every \(i\in J\),
\[
\iprod{y_J^0}{\psi_i}
=
\frac{1}{1+\eta}\iprod{(T^{-1})^*x_J^0}{Tu_i}
=
\frac{1}{1+\eta}\iprod{x_J^0}{u_i}
=
-\frac{a_k}{1+\eta}.
\]

Let
\[
Q:X\to E^*,
\qquad
Q(x)(\phi):=\iprod{\phi}{x}
\quad (\phi\in E),
\]
be the canonical evaluation map. By Goldstine's theorem \cite{rudin1991functional}, \(Q(\ball{}_X)\) is weak\(^*\)-dense in \(\ball{}_{E^*}\). Since \(E\) is finite-dimensional, weak\(^*\)-density coincides with norm density. Therefore, for every \(J\subset[2k]\) with \(\card{J}=k\), there exists \(x_J\in\ball{}_X\) such that
\[
\norm{Qx_J-y_J^0}_{E^*}<\delta.
\]
In fact, one can say more: Banach spaces are locally reflexive. We also note that the points \(x_J\) are chosen in the ambient space \(X\) and need not lie in a subspace of dimension \(2k\). At this stage, however, we only need to control their evaluations on the finite-dimensional space \(E\). 

Hence, for every \(i\in J\),
\[
\iprod{\psi_i}{x_J}
=
{Q(x_J)}(\psi_i)
\le
\iprod{y_J^0}{\psi_i} + \delta \norm{\psi_i}
\leq 
-\frac{a_k}{1+\eta}+\delta.
\]
Set
\[
b:=\frac{a_k}{1+\eta}-\delta.
\]
Then \(b>a_k-\varepsilon\), and for every \(i\in J\),
\[
\iprod{\psi_i}{x_J}\le -b.
\]

Now define
\[
K_i:=\conv\!\braces{x_J \st J\subset[2k],\ \card{J}=k,\ i\in J},
\qquad i\in[2k].
\]
Each \(K_i\) is a compact convex subset of \(\ball{}_X\).

If \(J\subset[2k]\) and \(\card{J}=k\), then \(x_J\in K_i\) for every \(i\in J\). Thus,
\(
x_J\in \ball{}_X\cap\bigcap_{i\in J}K_i,
\)
so the \(k\)-wise intersection property holds.

By convexity,
\[
\iprod{\psi_i}{z}\le -b
\qquad\text{for all } z\in K_i.
\]

Now assume that \(y\in X\) and \(r>0\) satisfy
\[
\dist{y}{K_i}\le r
\qquad\text{for all } i\in[2k].
\]
For each \(i\), choose \(z_i\in K_i\) such that
\(
\norm{y-z_i}\le r.
\)
Since \(\norm{\psi_i}\le1\) and 
\(\iprod{\psi_i}{z_i}\le -b\), we obtain
\[
\iprod{\psi_i}{y}
=
\iprod{\psi_i}{z_i} + \iprod{\psi_i}{y-z_i} 
\le
-b + \norm{y-z_i}
\le
-b + r.
\]
Averaging over \(i\in[2k]\) and using \(\sum_{i=1}^{2k}\psi_i=0\), we get
\[
0
=
\iprod{\frac1{2k}\sum_{i=1}^{2k}\psi_i}{y}
\le
-b+r.
\]
Hence
\[
r\ge b>a_k-\varepsilon.
\]
Therefore,
\[
\inf_{y\in X}\max_{i\in[2k]}\dist{y}{K_i}
\ge b>a_k-\varepsilon.
\]
This proves \Href{Lemma}{lem:trivial_type_counterexample}.
\end{proof}

\subsection{Characterization}

\begin{proof}[Proof of \Href{Theorem}{thm:Helly_approx_prop_characterization}]
As mentioned in the Introduction, the equivalence of \(X^*\) being of non-trivial type and \(X\) being of non-trivial type follows from the celebrated result of Pisier \cite[Theorem~2.1]{pisier1982holomorphic}. Therefore, it suffices to prove that \(X\) has the Helly approximation property if and only if \(X^*\) is of non-trivial type.

The implication
\[
X^* \text{ is of non-trivial type } \Longrightarrow X \text{ has the Helly approximation property}
\]
is exactly \Href{Theorem}{thm:no_dim_Helly_dual_type}.

For the converse, \Href{Lemma}{lem:trivial_type_counterexample} shows that if \(X^*\) is of trivial type, then
\[
r_k(X)\ge \frac{k}{2k-1}
\]
for all \(k\). It follows that \(r_k(X)\) does not tend to \(0\). Hence, \(X\) does not have the Helly approximation property.
\end{proof}

\section*{Acknowledgements}
I thank Alexander Polyanskii for stimulating discussions that led to this probabilistic viewpoint. I am also very grateful to Alexandros Eskenazis, who pointed out several factual mistakes in the first version of the manuscript and explained to me several deep results about Rademacher type.

\bibliographystyle{alpha}
\bibliography{../work_current/uvolit}

\begin{thebibliography}{ABMT20}

\bibitem[ABMT20]{adiprasito2020theorems}
Karim Adiprasito, Imre B{\'a}r{\'a}ny, Nabil~H. Mustafa, and Tam{\'a}s Terpai.
\newblock Theorems of {C}arath{\'e}odory, {H}elly, and {T}verberg without
  dimension.
\newblock {\em Discrete \& Computational Geometry}, 64(2):233--258, 2020.

\bibitem[AK25]{artstein2025b}
Zvi Artstein and Vladimir Kadets.
\newblock {$B$}-convexity, {C}onvexification of {M}inkowski {A}verages in a
  {B}anach {S}pace, and {S}{L}{L}{N} for {R}andom {S}ets.
\newblock {\em Journal of Convex Analysis}, 32(1):61--70, 2025.

\bibitem[Car11]{caratheodory1911variabilitatsbereich}
Constantin Carath{\'e}odory.
\newblock {\"U}ber den {V}ariabilit{\"a}tsbereich der {F}ourierschen
  {K}onstanten von positiven harmonischen {F}unktionen.
\newblock {\em Rendiconti Del Circolo Matematico di Palermo (1884-1940)},
  32(1):193--217, 1911.

\bibitem[Fac11]{fackler2011holomorphic}
Stephan Fackler.
\newblock Holomorphic {S}emigroups and the {G}eometry of {B}anach {S}paces.
\newblock Diploma thesis, Universit\"at Ulm, April 2011.

\bibitem[Hel23]{helly1923mengen}
E.~Helly.
\newblock {\"U}ber mengen konvexer {K}{\"o}rper mit gemeinschaftlichen
  {P}unkte.
\newblock {\em Jahresbericht der Deutschen Mathematiker-Vereinigung},
  32:175--176, 1923.

\bibitem[Iva21a]{ivanov2021approximate}
Grigory Ivanov.
\newblock Approximate {C}arath{\'e}odory’s theorem in uniformly smooth
  {B}anach spaces.
\newblock {\em Discrete \& Computational Geometry}, 66(1):273--280, 2021.

\bibitem[Iva21b]{ivanov2021no}
Grigory Ivanov.
\newblock No-dimension {T}verberg's theorem and its corollaries in {B}anach
  spaces of type $p$.
\newblock {\em Bulletin of the London Mathematical Society}, 53(2):631--641,
  2021.

\bibitem[Iva25]{Ivanov2025nodimHelly}
Grigory Ivanov.
\newblock No-dimensional {H}elly’s theorem in uniformly convex {B}anach
  spaces.
\newblock {\em Studia Scientiarum Mathematicarum Hungarica}, April 2025.

\bibitem[Iva26]{ivanov2026nodimensionalresults}
Grigory Ivanov.
\newblock No-dimensional results of combinatorial convexity. {D}imension
  strikes back.
\newblock \emph{arXiv preprint arXiv:2602.20035}, 2026.

\bibitem[LT13]{lindenstrauss2013classical}
Joram Lindenstrauss and Lior Tzafriri.
\newblock {\em Classical Banach spaces II: function spaces}, volume~97.
\newblock Springer Science \& Business Media, 2013.

\bibitem[MP76]{maurey1976series}
Bernard Maurey and Gilles Pisier.
\newblock S{\'e}ries de variables al{\'e}atoires vectorielles ind{\'e}pendantes
  et propri{\'e}t{\'e}s g{\'e}om{\'e}triques des espaces de banach.
\newblock {\em Studia Mathematica}, 58(1):45--90, 1976.

\bibitem[Pis80]{pisier1980remarques}
Gilles Pisier.
\newblock Remarques sur un r{\'e}sultat non publi{\'e} de {B}. {M}aurey.
\newblock {\em S{\'e}minaire Analyse fonctionnelle (dit" Maurey-Schwartz")},
  pages 1--12, 1980.

\bibitem[Pis82]{pisier1982holomorphic}
Gilles Pisier.
\newblock Holomorphic semi-groups and the geometry of {B}anach spaces.
\newblock {\em Annals of Mathematics}, 115(2):375--392, 1982.

\bibitem[Roc70]{rockafellar1970convex}
Ralph~Tyrrell Rockafellar.
\newblock {\em Convex analysis}.
\newblock Number~28. Princeton university press, 1970.

\bibitem[Rud91]{rudin1991functional}
Walter Rudin.
\newblock {\em Functional Analysis}.
\newblock International Series in Pure and Applied Mathematics. McGraw-Hill,
  New York, 1991.

\end{thebibliography}

\end{document}